\newcommand{\bb}{\mathbb}
\newcommand{\cc}{\mathcal}
\newcommand{\ol}{\overline}
\newcommand{\bs}{\backslash}
\newcommand{\te}{\theta}
\newcommand{\pd}{\partial}
\newcommand{\supp}{\mathrm{supp}}
\newcommand{\ca}{\mathrm{Cap}}
\newcommand{\x}{{\mathbf x}}
\renewcommand{\Re}{\mathrm{Re}}
\newtheorem{theorem}{Theorem}
\newtheorem{corollary}[theorem]{Corollary}
\theoremstyle{definition}
\theoremstyle{remark}
\newtheorem*{remark}{Remark}
\numberwithin{equation}{section}
\numberwithin{theorem}{section}
\begin{document}
\title{ Widom factors for generalized Jacobi measures}

\author{G\"{o}kalp Alpan}
\address{Department of Mathematics, Uppsala University, Uppsala, Sweden}
\email{gokalp.alpan@math.uu.se}

\subjclass[2010]{Primary 41A17; Secondary 41A44, 42C05, 33C45}
\keywords{Widom factors, Chebyshev polynomials, orthogonal polynomials, Jacobi polynomials, extremal polynomials}

\begin{abstract}
We study optimal lower and upper bounds for Widom factors \\$W_{\infty,n}(K,w)$ associated with Chebyshev polynomials for the weights \\$w(x)=\sqrt{1+x}$ and $w(x)=\sqrt{1-x}$ on compact subsets of $[-1,1]$. We show which sets saturate these bounds. We consider Widom factors $W_{2,n}(\mu)$ for $L_2(\mu)$ extremal polynomials for measures of the form $d\mu(x)=(1-x)^\alpha (1+x)^\beta  d\mu_K(x)$ where $\alpha+\beta\geq 1$, $\alpha,\beta\in\bb N\cup \{0\}$ and $\mu_K$ is the equilibrium measure of a compact regular set $K$ in $[-1,1]$ with $\pm 1\in K$. We show that for such measures the improved lower bound (which was first studied in \cite{AlpZin20}) $[W_{2,n}(\mu)]^2\geq 2S(\mu)$ holds. For the special cases $d\mu(x)=(1-x^2)d\mu_K(x)$,
$d\mu(x)=(1-x)d\mu_K(x)$, $d\mu(x)=(1+x)d\mu_K(x)$ we determine which sets saturate this lower bound and discuss how saturated lower bounds for $[W_{2,n}(\mu)]^2$ and $W_{\infty,n}(K,w)$ are related.
\end{abstract}

\date{\today}
\maketitle
\nocite{*}
%%%%%%%%%%%%%%%%%%%%%%%%%%%%%%%%%%%%%%%%%%%%%%%%%%%%%%%%%%%%%%%%%%%%%

%%%%%%%%%%%%%%%%%%%%%%%%%%%%%%%%%%%%%%%%%%%%%%%%%%%%%%%%%%%%%%%%%%%%%%%
\section{Introduction}
%%%%%%%%%%%%%%%%%%%%%%%%%%%%%%%%%%%%%%%%%%%%%%%%%%%%%%%%%%%%%%%%%%%%%%%
For the basic notions of potential theory, see \cite{Ran95,ST97}. Let $K$ be a compact subset of $\bb R$. We  denote the logarithmic capacity of $K$ by $\ca(K)$ . Let $\ca(K)>0$ and let us denote the equilibrium measure of $K$ by $\mu_K$.  The Green function for the domain $\overline{\bb C}\setminus K$ is given by
\begin{align}
	g_K(z) = -\log\ca(K)+\int\log|z-\zeta|\,d\mu_K(\zeta), \quad z\in\bb C.
\end{align}
Let $\mu$ be a finite (positive) Borel measure with $\supp(\mu)=K$ and let us consider the Lebesgue decomposition of $\mu$ with respect to $\mu_K$: $$d\mu=f d\mu_K+d\mu_s.$$
We define exponential relative entropy function for $\mu$ by
\begin{align}
	S(\mu):=\exp\left[{\int \log{f(x)}\,d\,\mu_K(x)}\right].
\end{align}

We use $\| \cdot \|_K$ to denote the sup-norm on $K$. Let $w$ be a weight function (non-negative and continuous on $K$ and positive on a non-polar subset of $K$) on $K$. Then $n$-th Chebyshev polynomial with respect to $w$ is the minimizer of $\|wP_n\|_K$ over all monic polynomials $P_n$ of degree $n$ and we denote it by $T_{n,w}^{(K)}$. Let
\begin{align}
t_n(K,w):= \|wT_{n,w}^{(K)}\|_K.
\end{align}
We define $n$-th Widom factor for the sup norm with respect to weight $w$ on $K$ by
\begin{align}
W_{\infty,n}(K,w):= \frac{t_n(K,w)}{\ca(K)^n}.
\end{align}

For the weight $w\equiv 1$, we drop $w$ and use the notation $T_n^{(K)}$, $t_n(K)$ and $W_{\infty,n}(K)$ respectively.

 For  a finite (positive) Borel measure $\mu$ with $\supp(\mu)=K$, $n$-th monic orthogonal polynomial for $\mu$ is the minimizer of $\|P_n\|_{L_2(\mu)}$ over all monic polynomials of degree $n$ and we  denote it by $P_n(\cdot;\mu)$. We define $n$-th Widom factor for $\mu$ by
\begin{align}
	W_{2,n}(\mu):= \frac{\|P_n(\cdot;\mu)\|_{L_2(\mu)}}{\ca(K)^n}.
\end{align}
For a deeper discussion of Widom factors we refer the reader to \cite{Alp17,Alp19,AlpGon15,AlpZin20,AlpZin2,And17,AndNaz18,Chr12,CSYZ19,CSZ11,CSZ17,CSZ3,CSZ4,GonHat15,Sch08,SchZin,Tot09,Tot11,Tot14,TotYud15,Wid69}.

For a regular (with respect to the Dirichlet problem) compact subset $K$ of $\bb R$, let $a=\inf K$, $b=\sup K$. Then $K$ can be written as
\begin{align}
	\displaystyle	K=[a,b]\setminus \cup_j I_j,
\end{align}
where the $I_j$'s are open disjoint intervals in $(a,b)$ that are bounded components of  the complement of $K$ in $\bb R$. Let $c_j$ be the critical point of $g_K$ in $I_j$. Then $K$ is called a Parreau-Widom set if
\begin{align}
	\mathrm{PW}(K):=\sum_j g_K(c_j)<\infty.
\end{align}

For the weight $w(x)\equiv 1$, the lower and upper bounds for $W_{\infty,n}(K,w)$ were studied in \cite{CSZ17,Sch08,Tot11} and the final version of these results can be found in \cite[eq.s (1.4), (4.1),  Theorem 1.1, Theorem 4.3]{CSZ3}. Optimal lower and upper bounds of $W_{\infty,n}(K,w)$ for $w(x)=\sqrt{1-x^2}$ have been studied in \cite[Theorem 3]{SchZin}. Our first result (see also Theorem \ref{thm2} for $w(x)=\sqrt{1-x}$) is as follows:

\begin{theorem}\label{thm1}
	Let $K$ be a compact non-polar subset of $[-1,1]$ and 
	\begin{align}
		w(x)=\sqrt{1+x}.
	\end{align}
	Then
	
	\begin{enumerate}[$(i)$]
		\item For each $n\in\bb N$
		\begin{align}\label{cap1}
			W_{\infty,n}(K,w)\geq 2\sqrt{\ca(K)}.
		\end{align}
		Equality is attained in \eqref{cap1} if and only if there exists a polynomial $S_n$ of degree $n$ such that
		\begin{align}\label{leb}
			K=\{z\in\bb C:(1+z)S_n^2(z)\in[0,1]\}.
		\end{align}
		\item In addition, let us assume that $K$ is a regular Parreau-Widom set. Then 
		\begin{align}\label{cap2}
			W_{\infty,n}(K,w)\leq 2 \sqrt{\ca(K)} e^{(1/2)g_K(-1)+ \mathrm{PW}(K)}.
		\end{align}
		Equality is attained  in \eqref{cap2} if and only if $K=[-1,b]$ for some $b\in (-1,1]$.
	\end{enumerate}
	
\end{theorem}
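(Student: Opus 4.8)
The plan is to reduce the weighted Chebyshev problem on $(K,w)$ to an ordinary (unweighted) Chebyshev problem on a symmetric polynomial preimage, and then import the known sharp bounds and their equality cases for the unweighted setting. Introduce the quadratic map $\psi(y)=2y^2-1$ and set $\widetilde K:=\psi^{-1}(K)=\{y\in\mathbb C:\psi(y)\in K\}$. Since $\psi$ is even, $\widetilde K$ is symmetric about the origin; since $K$ is non-polar so is $\widetilde K$, and $K\subset[-1,1]$ forces $\widetilde K\subset[-1,1]$. Because polynomial preimages preserve regularity and (by the computation below) finiteness of the Parreau--Widom sum, $\widetilde K$ inherits the hypotheses of part $(ii)$. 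I would first record the two standard transformation formulas for preimages under a degree-$2$ map with leading coefficient $2$: $\ca(\widetilde K)=(\ca(K)/2)^{1/2}$ and $g_{\widetilde K}(y)=\tfrac12\,g_K(\psi(y))$.

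The heart of the argument is an exact identity relating the two extremal problems, resting on $1+\psi(y)=2y^2$ and hence $w(\psi(y))=\sqrt2\,|y|$. For a monic polynomial $T$ of degree $n$ the product $R(y):=y\,T(\psi(y))$ is an odd polynomial of degree $2n+1$ with leading coefficient $2^n$, and $|w(\psi(y))\,T(\psi(y))|=\sqrt2\,|R(y)|$, so that $\|wT\|_K=\sqrt2\,\|R\|_{\widetilde K}$. The map $T\mapsto R/2^n$ is a bijection from monic polynomials of degree $n$ onto monic odd polynomials of degree $2n+1$ (an even polynomial of degree $2n$ is a polynomial in $y^2$, hence in $\psi(y)$, with the monic normalization corresponding to leading coefficient $2^n$). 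Because $\widetilde K$ is symmetric, uniqueness of the Chebyshev polynomial forces $T_{2n+1}^{(\widetilde K)}$ to be odd, so minimizing over the odd class is the same as minimizing over all monic polynomials; this yields $t_n(K,w)=\sqrt2\cdot 2^n\,t_{2n+1}(\widetilde K)$ together with the extremal identity $T_{2n+1}^{(\widetilde K)}(y)=y\,T_{n,w}^{(K)}(\psi(y))/2^n$. Dividing by $\ca(K)^n$ and inserting $\ca(\widetilde K)=(\ca(K)/2)^{1/2}$ gives the master identity
\begin{align}
W_{\infty,n}(K,w)=\sqrt{\ca(K)}\;W_{\infty,2n+1}(\widetilde K).
\end{align}

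From the master identity both displays follow by quoting the unweighted theory on $\widetilde K\subset\mathbb R$. For part $(i)$ the Schiefermayr bound $W_{\infty,m}(\widetilde K)\ge2$ (see \cite{Sch08,CSZ3}) immediately gives \eqref{cap1}; moreover equality holds iff $W_{\infty,2n+1}(\widetilde K)=2$, which by the equality characterization in \cite{CSZ3} happens iff $\widetilde K=P^{-1}([-1,1])$ for a polynomial $P$ of degree $2n+1$ proportional to $T_{2n+1}^{(\widetilde K)}$. I would then translate this back through $\psi$: using $1+\psi(y)=2y^2$ one checks that $P(y)^2=(1+\psi(y))\,S_n(\psi(y))^2$ for a real polynomial $S_n$ of degree $n$ proportional to $T_{n,w}^{(K)}$. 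Since $\psi:\mathbb C\to\mathbb C$ is onto and $\widetilde K=\psi^{-1}(K)$, the lemniscate identity $\widetilde K=\{y:P(y)^2\in[0,1]\}$ descends to \eqref{leb}, and the converse direction runs the same computation backwards. For part $(ii)$ the Totik--Widom bound $W_{\infty,m}(\widetilde K)\le 2e^{\mathrm{PW}(\widetilde K)}$ gives \eqref{cap2} once I verify $\mathrm{PW}(\widetilde K)=\tfrac12 g_K(-1)+\mathrm{PW}(K)$, and the equality statement from the same cited results transfers to ``$\widetilde K$ is an interval,'' which via $\psi([-r,r])=[-1,2r^2-1]$ is equivalent to $K=[-1,b]$.

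The two delicate points are the computation of $\mathrm{PW}(\widetilde K)$ and the passage of the equality loci between real and complex descriptions. For the Parreau--Widom constant one must list the critical points of $g_{\widetilde K}=\tfrac12\,g_K\circ\psi$ lying in bounded gaps of $\widetilde K$: by the chain rule these are the point $y=0$ (from $\psi'(0)=0$) together with the two preimages $\pm y_j$ of each critical point $c_j$ of $g_K$. Each pair $\pm y_j$ contributes $2\cdot\tfrac12 g_K(c_j)=g_K(c_j)$, reproducing $\mathrm{PW}(K)$, while the central point contributes $g_{\widetilde K}(0)=\tfrac12 g_K(-1)$ exactly when $-1\notin K$ (so that $0$ lies in a genuine gap); when $-1\in K$ this term vanishes and $g_K(-1)=0$ as well, so in all cases $\mathrm{PW}(\widetilde K)=\tfrac12 g_K(-1)+\mathrm{PW}(K)$. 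For the second point I would use that $s^2\in[0,1]$ forces $s\in[-1,1]$ for $s\in\mathbb C$, so the complex lemniscate $\{P^2\in[0,1]\}$ really equals $P^{-1}([-1,1])=\widetilde K$, and then push forward by the surjection $\psi$ to obtain the full complex form of \eqref{leb} rather than merely its trace on $[-1,1]$. I expect the bookkeeping in these two steps, rather than the reduction itself, to be where the argument requires the most care.
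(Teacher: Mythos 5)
Your proposal is correct, and it takes a genuinely different route from the paper. The paper argues directly on $K$: it takes the alternation set of $T_{n,w}^{(K)}$, forms $Q_{2n+1}(x)=(1+x)\bigl(T_{n,w}^{(K)}(x)\bigr)^2$ and the enlarged set $K_n=Q_{2n+1}^{-1}\bigl([0,t_n(K,w)^2]\bigr)\supseteq K$, and derives everything from the preimage capacity formula $\ca(K_n)^{2n+1}=t_n(K,w)^2/4$: the lower bound \eqref{cap1} from $\ca(K_n)\geq\ca(K)$, its equality case from \cite[Lemma 13]{SchZin} (equal capacities force $K_n=K$), and the upper bound \eqref{cap2} from $\log\bigl(\ca(K_n)/\ca(K)\bigr)=\int_{K_n\setminus K}g_K\,d\mu_{K_n}$ estimated gap by gap (each gap meets at most two bands of $\mu_{K_n}$-measure $1/(2n+1)$, and $[-1,a)$ at most one), with the equality case coming from strict monotonicity of $g_K$ on the gaps. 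You instead conjugate by $\psi(y)=2y^2-1$, prove the master identity $W_{\infty,n}(K,w)=\sqrt{\ca(K)}\,W_{\infty,2n+1}(\widetilde K)$ for $\widetilde K=\psi^{-1}(K)$ together with $\mathrm{PW}(\widetilde K)=\tfrac12 g_K(-1)+\mathrm{PW}(K)$, and then import the unweighted Schiefermayr and Totik--Widom bounds and their saturation theorems from \cite{Sch08,CSZ17,CSZ3}. I checked your key steps and they hold: the capacity and Green's function transfer under $\psi$, the oddness of $T_{2n+1}^{(\widetilde K)}$ via uniqueness on the symmetric set, the normalization giving $t_n(K,w)=\sqrt2\,2^n\,t_{2n+1}(\widetilde K)$, the Parreau--Widom accounting including the central gap at $y=0$, and the descent of the lemniscate identity through the surjection $\psi$ (using that $s^2\in[0,1]$ forces $s\in[-1,1]$). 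The trade-off: your reduction is shorter and explains conceptually where the factors $\sqrt{\ca(K)}$ and $e^{g_K(-1)/2}$ come from (they are the unweighted constants seen on the doubled set), but it consumes the per-$n$ saturation theorems of \cite{CSZ3} as black boxes, whereas the paper's argument is self-contained at that level (needing only the alternation theorem, Ransford's preimage formula, and \cite[Lemma 13]{SchZin}) and produces intermediate objects ($K_n$ and \eqref{cakn}) that are reused in Theorem \ref{thm3} and Section 3. In fact the two proofs are dual to each other: since $Q_{2n+1}(\psi(y))=2^{2n+1}\bigl(T_{2n+1}^{(\widetilde K)}(y)\bigr)^2$, the paper's comparison set satisfies $\psi^{-1}(K_n)=\bigl(T_{2n+1}^{(\widetilde K)}\bigr)^{-1}\bigl([-t_{2n+1}(\widetilde K),t_{2n+1}(\widetilde K)]\bigr)$, i.e., $K_n$ is exactly the image under $\psi$ of the unweighted comparison set upstairs on which the cited results of \cite{CSZ17,CSZ3} are themselves proved.
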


By \cite[Theorem 1.2]{Alp19} (see also \cite[Theorem 2.1]{AlpZin20}) for any finite Borel measure $\mu$ and $n\in \bb N$ the universal lower bound
\begin{align}\label{uni}
	[W_{2,n}(\mu)]^2\geq S(\mu)
\end{align}
holds. Besides, for any compact non-polar set $K$ in $\bb R$ and $n\in \bb N$ 
\begin{align}\label{uni2}
\inf_\mu	[W_{2,n}(\mu)]^2/S(\mu)=1
\end{align}
where the infimum is taken over measures $d\mu(x)=w(x)\,d\mu_K(x)$ with a polynomial $w(x)$ positive on $K$, see  \cite[Theorem 2.2]{AlpZin2}.
On the other hand, we can single out some measures (known examples from Sections 3-5 in \cite{AlpZin20} are $\mu_K$ when $K\subset \bb R$, isospectral torus of a finite gap set, Jacobi measures for certain parameters) such that the  improved lower bound 
\begin{align}\label{uni3}
	[W_{2,n}(\mu)]^2\geq 2 S(\mu)
\end{align}
holds for each $n\in\bb N$. In our second main result, we show that for another family of measures, \eqref{uni3} is satisfied.
\begin{theorem}\label{thm4}
	Let $K$ be a regular compact set such that $K\subset [-1,1]$ and $\pm 1\in K$. Let $\alpha, \beta \in \bb N\cup \{0\}$, $\alpha+\beta\geq 1$ and 
	\begin{align}\label{bor}
		d\mu(x)=(1-x)^{\alpha} (1+x)^\beta d\mu_K(x).
	\end{align}
	Then 
	\begin{align}\label{zor2}
		[W_{2,n}(\mu)]^2\geq 2 S(\mu).
	\end{align}
\end{theorem}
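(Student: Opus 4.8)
The plan is to reduce \eqref{zor2} to a short list of ``base'' measures together with the already-known bound for the equilibrium measure itself. First I would rewrite the exponential entropy. Since the singular part of $\mu$ vanishes and $f(x)=(1-x)^\al(1+x)^\be$ in \eqref{bor},
\[
S(\mu)=\exp\Big[\al\int\log(1-x)\,d\mu_K(x)+\be\int\log(1+x)\,d\mu_K(x)\Big].
\]
Because $K$ is regular and $\pm1\in K$, we have $g_K(\pm1)=0$, so the defining formula for $g_K$ (with $|1-\zeta|=1-\zeta$ and $|-1-\zeta|=1+\zeta$ for $\zeta\in[-1,1]$) gives $\int\log(1-x)\,d\mu_K=\int\log(1+x)\,d\mu_K=\log\ca(K)$. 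Hence $S(\mu)=\ca(K)^{\al+\be}$, and \eqref{zor2} is equivalent to
\[
\|P_n(\cdot;\mu)\|_{L_2(\mu)}^2=\min_{Q}\int Q(x)^2(1-x)^\al(1+x)^\be\,d\mu_K(x)\ \ge\ 2\,\ca(K)^{2n+\al+\be},
\]
the minimum taken over monic $Q$ of degree $n$.

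Second, I would extract a perfect square. Write $\al=2a+\eps_1$, $\be=2b+\eps_2$ with $\eps_1,\eps_2\in\{0,1\}$, and for monic $Q$ set $R=(1-x)^a(1+x)^bQ$, a polynomial of degree $N:=n+a+b$ with leading coefficient $\pm1$. Then $Q^2(1-x)^\al(1+x)^\be=R^2(1-x)^{\eps_1}(1+x)^{\eps_2}$, and as $Q$ runs over monic polynomials of degree $n$, $\pm R$ runs over a subset of the monic polynomials of degree $N$. Minimizing over this larger family yields
\[
\|P_n(\cdot;\mu)\|_{L_2(\mu)}^2\ \ge\ \min_{R}\int R^2(1-x)^{\eps_1}(1+x)^{\eps_2}\,d\mu_K,
\]
the minimum over monic $R$ of degree $N$. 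Since $2N+\eps_1+\eps_2=2n+\al+\be$, it suffices to prove, for every regular $K$ with $\pm1\in K$ and every monic $R$ of degree $N$, the four base estimates $\int R^2\,d\mu_K\ge2\ca(K)^{2N}$ (for $N\ge1$), $\int R^2(1\pm x)\,d\mu_K\ge2\ca(K)^{2N+1}$ (both signs), and $\int R^2(1-x^2)\,d\mu_K\ge2\ca(K)^{2N+2}$. The first is the known bound $[W_{2,N}(\mu_K)]^2\ge2$ from \cite{AlpZin20}, and the required range $N\ge1$ is exactly what the extraction produces in the case $\eps_1=\eps_2=0$ (which forces $\al+\be$ even, hence $a+b\ge1$).

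Third, and this is where I expect the main obstacle, come the weighted base cases: reproducing the factor $2$ for the measures $(1\pm x)\,d\mu_K$ and $(1-x^2)\,d\mu_K$ over an arbitrary regular $K$. The universal bound \eqref{uni} only yields the factor $1$, and the underlying Jensen/arithmetic--geometric estimate cannot be sharpened to give the $2$; a genuinely different argument is needed. I would first reduce to real monic $R$ (replacing $R$ by its real part does not increase the integral, since $\mu$ is real). I would then treat the integrand as a nonnegative polynomial $R^2w$ of degree $2N+\eps_1+\eps_2$ and represent $\int R^2 w\,d\mu_K$ through the boundary values of the complex Green function of $\overline{\bb C}\setminus K$, the factor $2$ arising because $K\subset\bb R$ makes the two sides of the cut contribute equally to the top-order term, exactly as happens for $[-1,1]$ with the Chebyshev weights of the first and second kind; the hypotheses $g_K(\pm1)=0$ are precisely what make the weight contribute the clean capacity powers $\ca(K)^{\eps_1+\eps_2}$ rather than extra Green-function factors.

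The $(1-x^2)$ case should run parallel to the $\sqrt{1-x^2}$ sup-norm analysis of \cite{SchZin}, and the $(1\pm x)$ cases parallel to Theorem \ref{thm1} and Theorem \ref{thm2}; this parallel is the source of the ``how the saturated bounds are related'' discussion promised in the abstract, although I expect the actual $L_2$ estimates to be proved by the potential-theoretic representation above rather than deduced from the sup-norm bounds (there is no general inequality forcing the $L_2$ extremal value to exceed the $L_\infty$ one). Finally I would check the smallest degrees directly, where the inequalities read $\int(1\pm x)\,d\mu_K\ge2\ca(K)$ and $\int(1-x^2)\,d\mu_K\ge2\ca(K)^2$; these, together with the equilibrium-measure bound, close all four base cases and hence, by the reduction above, establish \eqref{zor2}.
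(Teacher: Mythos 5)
Your reduction step is correct and is a genuinely different organization from the paper's proof: writing $\al=2a+\eps_1$, $\be=2b+\eps_2$ and absorbing $(1-x)^a(1+x)^b$ into the monic polynomial does validly reduce the general statement to the known equilibrium-measure bound $[W_{2,N}(\mu_K)]^2\ge 2$ of \cite{AlpZin20} together with the three weighted base cases $(\al,\be)\in\{(0,1),(1,0),(1,1)\}$. But this reduction only reshuffles the difficulty: those three base cases are not citable known results for arbitrary regular $K$ with $\pm1\in K$ --- they are precisely special cases of the theorem you are asked to prove --- and your treatment of them, which you yourself flag as ``the main obstacle,'' is a heuristic rather than a proof. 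That is the genuine gap.

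Concretely, the idea ``represent $\int R^2w\,d\mu_K$ through boundary values of the Green function; the factor $2$ arises because the two sides of the cut contribute equally'' cannot produce the constant $2$ as stated. Making such a representation precise means using the universal covering map $\x:\bb D\to\ol{\bb C}\setminus K$ and the Blaschke product $B$ with $|B|=e^{-g_K\circ\x}$; but then, for a polynomial $P$ of degree $M$ with leading coefficient $1$ and $P\ge 0$ on $K$, the mean-value property only gives $\int P\,d\mu_K=\int P(\x(e^{i\te}))\,\f{d\te}{2\pi}\ge\Re\int P(\x(e^{i\te}))B^M(e^{i\te})\,\f{d\te}{2\pi}=\ca(K)^M$, i.e., exactly the universal factor-$1$ bound \eqref{uni}, with no improvement coming from $K\subset\bb R$. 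The factor $2$ requires an additional mechanism that is absent from your sketch: an \emph{analytic square root of the weight}. The paper constructs $H$ on $\bb D$ with $H^2=(-1)^\al(1-\x)^\al(1+\x)^\be B^{\al+\be}$ (nonvanishing precisely because $\pm1\in K$), writes $\ca(K)^{(2n+\al+\be)/2}=\int H(e^{i\te})R_n(\x(e^{i\te}))B^n(e^{i\te})\,\f{d\te}{2\pi}$, adds the complex conjugate (legitimate since $\x(e^{i\te})\in\bb R$ and $R_n$ is real), and applies Cauchy--Schwarz; the constant then comes from $\int\bigl(2+2\Re[(-1)^\al B^{2n+\al+\be}(e^{i\te})]\bigr)\f{d\te}{2\pi}=2$, the cross term vanishing by the mean-value property because $B(0)=0$. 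Splitting the integrand into $\sqrt{P}$ times a unimodular-plus-conjugate factor, rather than integrating $P$ against $B^M$ directly, is the whole point, and nothing in your proposal supplies it. Note finally that this machinery handles all $\al,\be\ge 0$ with $\al+\be\ge1$ uniformly, so once it is in place your (correct) parity reduction becomes unnecessary.
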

Note that $(1-x)^\alpha (1+x)^\beta \,d\mu_{[-1,1]}=\frac{1}{\pi}(1-x)^{\alpha-1/2}(1+x)^{\beta-1/2}dx\restriction_{[-1,1]}$.  So these measures are indeed Jacobi measures when $K=[-1,1]$ and the measures of the form \eqref{bor} can be considered as generalized Jacobi measures. The four cases $\alpha=0,1$, $\beta=0,1$ are of special importance and we have the following results (see \cite[Sections 3.3 and 4.2]{Mas}, \cite[Section 4]{AlpZin20}):
\begin{theorem}\label{smi}
	Let $T_n, U_n, V_n, W_n$ be Chebyshev polynomials of the first, second, third and fourth kinds respectively, that is $T_n(x)=\cos(n\theta)$, \\$U_n(x)=\sin((n+1)\theta)/\sin(\theta)$ , $V_n(x)=\cos((n+1/2)\theta)/\cos(\theta/2)$, \\$W_n(x)=\sin((n+1/2)\theta)/\sin(\theta/2)$ for $x=\cos{\theta}$,  $0\leq \theta \leq \pi$. Let $K=[-1,1]$. Then
	\begin{align}\label{TNu}
T_{n,w}^{(K)}(x)&=\frac{T_n(x)}{2^{n-1}} \mbox{  for  } w(x)=1,\\
T_{n,w}^{(K)}(x)&=\frac{U_n(x)}{2^{n}} \mbox{  for  } w(x)=\sqrt{1-x^2},\\
T_{n,w}^{(K)}(x)&=\frac{V_n(x)}{2^{n}} \mbox{  for  } w(x)=\sqrt{1+x},\\
T_{n,w}^{(K)}(x)&=\frac{W_n(x)}{2^{n}} \mbox{  for  } w(x)=\sqrt{1-x}
	\end{align}
and $P_n(x; w^2\,d\mu_K)=T_{n,w}^{(K)}(x)$ for the cases $w(x)=1$, $w(x)=\sqrt{1-x^2}$, $w(x)= \sqrt{1+x}$, $w(x)= \sqrt{1-x}$. For each of these four cases 
\begin{align}
	[W_{2,n}(w^2\,d\mu_K)]^2=2S(w^2\,d\mu_K)
\end{align}
is satisfied for all $n\in \bb N$. For all $n\in \bb N$ we also have
	\begin{align}\label{TNunn}
	W_{\infty,n}{(K,w)}&=2  \mbox{  for  } w(x)=1,\\
		W_{\infty,n}{(K,w)}&=1  \mbox{  for  } w(x)=\sqrt{1-x^2},\\
			W_{\infty,n}{(K,w)}&=\sqrt{2}  \mbox{  for  } w(x)=\sqrt{1+x},\\
				W_{\infty,n}{(K,w)}&=\sqrt{2}  \mbox{  for  } w(x)=\sqrt{1-x}.
\end{align}
\end{theorem}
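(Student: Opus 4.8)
The plan is to treat all four weights uniformly through the substitution $x=\cos\theta$, $0\le\theta\le\pi$, under which $\ca([-1,1])=1/2$ and $d\mu_K=\frac1\pi\,d\theta$. Using $1-x^2=\sin^2\theta$, $1+x=2\cos^2(\theta/2)$ and $1-x=2\sin^2(\theta/2)$ together with the trigonometric definitions of $T_n,U_n,V_n,W_n$, I would first record the reductions
\begin{align*}
T_n(x)=\cos(n\theta),\quad \sqrt{1-x^2}\,U_n(x)=\sin((n+1)\theta),\quad \sqrt{1+x}\,V_n(x)=\sqrt2\cos\bigl((n+\tfrac12)\theta\bigr),\quad \sqrt{1-x}\,W_n(x)=\sqrt2\sin\bigl((n+\tfrac12)\theta\bigr).
\end{align*}
Dividing by the leading coefficients ($2^{n-1}$ for $T_n$, $2^n$ for $U_n,V_n,W_n$) yields the monic candidates on the right of \eqref{TNu}. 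Each product $w(x)T_{n,w}^{(K)}(x)$ is then a single trigonometric function of constant amplitude, so $\|wT_{n,w}^{(K)}\|_K$ equals $1/2^{n-1}$, $1/2^n$, $\sqrt2/2^n$, $\sqrt2/2^n$ respectively, and in each case the extreme value $\pm\|wT_{n,w}^{(K)}\|_K$ is attained with strictly alternating signs at exactly $n+1$ points of $[-1,1]$.

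Next I would verify that these candidates are genuinely the weighted Chebyshev polynomials. The key observation is that in every case the $n+1$ alternation points are strictly interior to the set where $w>0$: the zeros $x=\pm1$ of $w$ correspond to $\theta\in\{0,\pi\}$, and one checks directly that these are never among the alternation angles. Thus a standard comparison argument applies: if some monic $Q_n$ had $\|wQ_n\|_K<\|wT_{n,w}^{(K)}\|_K$, then at each alternation point $w(x_j)\bigl(T_{n,w}^{(K)}(x_j)-Q_n(x_j)\bigr)$ would inherit the alternating sign of $w(x_j)T_{n,w}^{(K)}(x_j)$, forcing the degree $\le n-1$ polynomial $T_{n,w}^{(K)}-Q_n$ to have at least $n$ sign changes and hence to vanish identically, a contradiction. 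This establishes \eqref{TNu} and the values of $t_n(K,w)$, whence the sup-norm factors $W_{\infty,n}(K,w)=t_n(K,w)/\ca(K)^n=2^n t_n(K,w)$ give $2,1,\sqrt2,\sqrt2$, which is \eqref{TNunn}.

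For the orthogonal-polynomial statement I would note that $w^2\,d\mu_K$ equals, up to a constant, $\tfrac{dx}{\sqrt{1-x^2}}$, $\sqrt{1-x^2}\,dx$, $\sqrt{\tfrac{1+x}{1-x}}\,dx$ and $\sqrt{\tfrac{1-x}{1+x}}\,dx$ in the four cases, so the classical orthogonality relations for $T_n,U_n,V_n,W_n$ identify these four families as orthogonal with respect to $w^2\,d\mu_K$; matching leading coefficients gives $P_n(\cdot;w^2\,d\mu_K)=T_{n,w}^{(K)}$. Finally, to obtain $[W_{2,n}(w^2\,d\mu_K)]^2=2S(w^2\,d\mu_K)$ I would compute both sides explicitly via the same substitution: each integrand $\int |P_n|^2\,w^2\,d\mu_K$ collapses to a constant multiple of $\frac1\pi\int_0^\pi\cos^2$ or $\frac1\pi\int_0^\pi\sin^2$ of an (half-)integer multiple of $\theta$, giving $\|P_n\|_{L_2}^2$ independent of $n$, while $S(w^2\,d\mu_K)=\exp\bigl[\int\log w^2\,d\mu_K\bigr]$ is evaluated from $\int_0^\pi\log\sin\theta\,d\theta=-\pi\log2$ and its cosine analogue. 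The two computations match in all four cases (yielding $[W_{2,n}]^2=2,\tfrac12,1,1$ and $2S=2,\tfrac12,1,1$); for the three cases with $\alpha+\beta\ge1$ this also exhibits equality in the bound \eqref{zor2} of Theorem \ref{thm4}.

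The genuinely delicate point is the extremality/identification step of the second paragraph, since three of the weights vanish at an endpoint and the equioscillation criterion is not automatic for weights with zeros in $K$. I expect this to be the main obstacle, and it is precisely resolved by the elementary observation that the alternation points always avoid the zeros of $w$, so that the sign-counting argument for a degree $\le n-1$ polynomial goes through unchanged; the remaining computations are then routine evaluations of elementary trigonometric integrals.
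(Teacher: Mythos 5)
Your proposal is correct, but it follows a genuinely different route from the paper. The paper contains no self-contained proof of Theorem \ref{smi}: the explicit formulas \eqref{TNu} are quoted from the literature (\cite[Sections 3.3 and 4.2]{Mas}, \cite[Section 4]{AlpZin20}), and the remark after Theorem \ref{thm5} derives the Widom-factor statements by specializing the paper's general saturation results to $K=[-1,1]$ --- since $[-1,1]$ is a set of the form $\{z:(1+z)S_n^2(z)\in[0,1]\}$ (and similarly for the other weights), Theorem \ref{thm4} and Theorem \ref{thm5}, together with \cite[Theorem 3]{SchZin} and \cite[Theorem 4.4 and Corollary 4.5]{AlpZin2} (the latter covering the unweighted case $w\equiv 1$, which lies outside Theorem \ref{thm4}), give $[W_{2,n}(w^2 d\mu_K)]^2=2S(w^2 d\mu_K)$ and the values in \eqref{TNunn}. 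You instead verify everything from scratch: the substitution $x=\cos\theta$ turns each $wT_{n,w}^{(K)}$ into a pure (co)sine of constant amplitude, the alternation/sign-counting argument identifies the four trigonometric families as the weighted Chebyshev polynomials (your worry about zeros of $w$ is in fact automatic, since any point where $|wT_{n,w}^{(K)}|$ attains its positive maximum must have $w>0$; note also that strictly speaking your argument shows your candidates are minimizers, and the identification with $T_{n,w}^{(K)}$ invokes the standard uniqueness of weighted Chebyshev polynomials, which the paper also presupposes), classical orthogonality identifies them with the monic orthogonal polynomials, and elementary trigonometric integrals evaluate both $[W_{2,n}]^2$ and $2S(\mu)$ to $2,\tfrac12,1,1$ in the four cases. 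What your computation buys is a fully elementary, self-contained proof independent of Sections 2--3 and of the cited references; what the paper's route buys is the conceptual point --- which is the reason Theorem \ref{smi} is stated at all --- that these classical Jacobi-type examples are precisely instances of sets saturating the general lower bounds of Theorems \ref{thm1}, \ref{thm2}, \ref{thm4} and \ref{thm5}.
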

In the next theorem, we determine the sets for which equality is attained in \eqref{uni3} for $d\mu=w^2\,d\mu_K$ for the functions $w(x)=\sqrt{1-x^2}$, $w(x)=\sqrt{1+x}$, $w(x)=\sqrt{1-x}$.
Indeed, we see that for each of these three cases, $W_{\infty,n}{(K,w)}$ and $[W_{2,n}(\mu)]^2$ realize their theoretical lower bounds simultaneously.

\begin{theorem}\label{thm5}
	Let $K$ be a regular compact set such that $K\subset [-1,1]$, $\pm 1 \in K$.
	\begin{enumerate}[$(i)$]
		\item Let 
		\begin{align}
			d\mu(x)=(1-x^2)d\mu_K(x)
		\end{align}
		and
		\begin{align}
			w(x)=\sqrt{1-x^2}.
		\end{align}
		Then $[W_{2,n}(\mu)]^2=2S(\mu)$ if and only if 
		\begin{align}\label{c1}
			K=\{z\in \bb C: (1-z^2)S_n^2(z)\in [0,1]\}
		\end{align}
		for some polynomial $S_n$ of degree $n$ with leading coefficient $c$.
		
		\item Let 
		\begin{align}
			d\mu(x)=(1+x)d\mu_K(x)
		\end{align}
		and
		\begin{align}
			w(x)=\sqrt{1+x}.
		\end{align}	
		Then $[W_{2,n}(\mu)]^2=2S(\mu)$ if and only if 
		\begin{align}\label{c2}
			K=\{z\in \bb C: (1+z)S_n^2(z)\in [0,1]\}
		\end{align}
		for some polynomial $S_n$ of degree $n$ with leading coefficient $c$. 
		
		\item Let 
		\begin{align}
			d\mu(x)=(1-x)d\mu_K(x)
		\end{align}
		and
		\begin{align}
			w(x)=\sqrt{1-x}.
		\end{align}
		Then $[W_{2,n}(\mu)]^2=2S(\mu)$ if and only if 
		\begin{align}\label{c3}
			K=\{z\in \bb C: (1-z)S_n^2(z)\in [0,1]\}
		\end{align}
		for some polynomial $S_n$ of degree $n$ with leading coefficient $c$. 
	\end{enumerate}
	In all three cases, $[W_{2,n}(\mu)]^2=2S(\mu)$ implies
	\begin{align}\label{e1}
		P_n(x;\mu)=\frac{S_n(x)}{c}=T_{n,w}^{(K)}(x).
	\end{align}
\end{theorem}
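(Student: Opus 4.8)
The plan is to reduce all three parts to one extremal statement about the equilibrium measure and then read off the rigidity. In each part write $d\mu=w^2\,d\mu_K$, so that $\|P_n(\cdot;\mu)\|_{L_2(\mu)}^2=\int w^2P_n^2\,d\mu_K=\int R\,d\mu_K$, where $R:=w^2P_n^2$ is a real polynomial of degree $N$ ($N=2n+1$ in $(ii),(iii)$ and $N=2n+2$ in $(i)$) with $R\ge0$ on $K$. First I would record $S(\mu)$: since $\pm1\in K$ and $K$ is regular, $g_K(\pm1)=0$, whence $\int\log(1\pm x)\,d\mu_K=\log\ca(K)$, giving $S(\mu)=\ca(K)$ in $(ii),(iii)$ and $S(\mu)=\ca(K)^2$ in $(i)$. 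Thus $[W_{2,n}(\mu)]^2=2S(\mu)$ is equivalent to $\int R\,d\mu_K=2\,\ca(K)^{N}$, i.e. to sharpness of the bound of Theorem \ref{thm4}.

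For the ``if'' direction I would argue directly with an explicit competitor. If $R_0:=w^2S_n^2$ satisfies $K=R_0^{-1}([0,1])$, then $R_0$ maps $K$ onto $[0,1]$ and $(R_0)_*\mu_K=\mu_{[0,1]}$, so $\int R_0\,d\mu_K=\int_0^1 t\,d\mu_{[0,1]}(t)=\tfrac12$, while the capacity-of-preimage formula gives $\ca(K)^N=1/(4c^2)$. Hence the monic competitor $S_n/c$ satisfies $\int w^2(S_n/c)^2\,d\mu_K=1/(2c^2)=2\,\ca(K)^N$, so $[W_{2,n}(\mu)]^2\le 2S(\mu)$; combined with Theorem \ref{thm4} this is equality, and uniqueness of the $L_2(\mu)$ minimizer forces $P_n(\cdot;\mu)=S_n/c$. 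That $S_n/c=T_{n,w}^{(K)}$ then follows from the equality clauses of Theorems \ref{thm1}/\ref{thm2} (and \cite{SchZin} for $w=\sqrt{1-x^2}$), which yields \eqref{e1}.

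For the converse I would fold onto a symmetric auxiliary set. In $(ii)$ set $F(y)=2y^2-1$, $\wti K=F^{-1}(K)=\{y:2y^2-1\in K\}$ and $Q(y)=yP_n(2y^2-1)$; using $1+x=2y^2$, $F_*\mu_{\wti K}=\mu_K$ and $\ca(\wti K)^2=\ca(K)/2$, one gets $\int(1+x)P_n^2\,d\mu_K=2\int Q^2\,d\mu_{\wti K}$, an equilibrium-measure $L_2$ integral on $\wti K$ with $Q$ of degree $2n+1$ and leading coefficient $2^n$. Sharpness of $[W_{2,n}(\mu)]^2=2S(\mu)$ then forces $\|Q/2^n\|_{L_2(\mu_{\wti K})}^2=2\,\ca(\wti K)^{2(2n+1)}$, so $Q/2^n$ attains the improved bound $[W_{2,2n+1}(\mu_{\wti K})]^2=2$ for the equilibrium measure of $\wti K$. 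Invoking the known equality case of that bound (\cite{AlpZin20}) identifies $\wti K$ as a full polynomial preimage, and translating back through $x=2y^2-1$ gives $K=\{x:(1+x)P_n^2(x)\in[0,M]\}$, i.e. \eqref{c2} with $S_n=P_n/\sqrt{M}$. Case $(iii)$ is identical with $x=1-2y^2$.

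The main obstacle is case $(i)$, together with the underlying sharp inequality $\int R\,d\mu_K\ge 2\,|\mathrm{lead}(R)|\,\ca(K)^N$ for $R\ge0$ on $K$ and its rigidity. Here $x=2y^2-1$ sends $1-x^2$ to $4y^2(1-y^2)$, leaving a residual factor $1-y^2$ on $\wti K$, so $(i)$ does not collapse to the equilibrium measure in one step; I would instead treat it through the nonnegative-polynomial lemma above, whose proof rests on the pushforward-potential identity $U^{\nu}(s)=-\log|\mathrm{lead}(R)|-N\log\ca(K)-\sum_j g_K(\xi_j(s))$, with $\nu=R_*\mu_K$ and $\{\xi_j(s)\}=R^{-1}(s)$. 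Both the inequality and its rigidity come from $g_K\ge0$: equality at the level of the mean of $\nu$ forces every preimage $\xi_j(s)\in K$, that is $R^{-1}(R(K))=K$, which is exactly \eqref{c1}. Converting this potential-theoretic rigidity into a statement about the mean $\int t\,d\nu$ rather than about $U^\nu$ itself is the step I expect to require the most care.
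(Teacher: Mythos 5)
Your ``if'' direction and your converses for parts $(ii)$ and $(iii)$ are essentially sound, and they take a genuinely different route from the paper's. For the ``if'' direction you use the pushforward identity $(R_0)_*\mu_K=\mu_{[0,1]}$ for $K=R_0^{-1}([0,1])$ together with $\ca(K)^N=1/(4|c|^2)$, which is more elementary than the paper's argument (the paper integrates the covering-map representation of $T_N^{(K)}$ from Theorem \ref{thm3} and \cite{CSZ17} against $d\theta/2\pi$, using that $\int B^k(e^{i\theta})\,d\theta=0$). For the converses in $(ii)$/$(iii)$, your quadratic folding $x=2y^2-1$, which transports the problem to the equilibrium measure of $\wti K=F^{-1}(K)$ and then invokes the known saturation result for the weight $w\equiv1$, is a clean alternative to the paper's route (equality in Cauchy--Schwarz in \eqref{aa1}, identification of two bounded analytic functions on $\bb D$ from their a.e. boundary values, deduction of $W_{\infty,n}(K,w)=2\sqrt{\ca(K)}$, then Theorem \ref{thm1}$(i)$). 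Two small repairs there: the $w\equiv1$ saturation statement you need is \cite[Theorem 4.4, Corollary 4.5]{AlpZin2}, not \cite{AlpZin20} (which contains only the inequality), and you should record that $\wti K$ is regular (polynomial preimages of regular compacts are regular, via $g_{\wti K}=\tfrac12\,g_K\circ F$) so that this result applies.

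The genuine gap is the converse of part $(i)$. As you note, the folding degenerates there ($1-x^2$ pulls back to $4y^2(1-y^2)$, so you land in case $(i)$ again, one degree higher), and your substitute---the lemma $\int R\,d\mu_K\ge 2\,|\mathrm{lead}(R)|\,\ca(K)^N$ for $R\ge0$ on $K$, with equality iff $K=R^{-1}([0,\|R\|_K])$---is stated but never proved. Your pushforward-potential identity is correct, and it does give the pointwise bound $U^\nu(s)\le-\log\kappa$, $\kappa=|\mathrm{lead}(R)|\,\ca(K)^N$, with equality at $s$ precisely when $R^{-1}(s)\subset K$. But that is rigidity at the level of the \emph{potential}, not of the \emph{mean}: to finish you must solve the extremal problem ``minimize $\int t\,d\nu$ over probability measures $\nu$ on $[0,\infty)$ satisfying $U^\nu\le-\log\kappa$ everywhere'' and show the minimum is $2\kappa$, attained only by the equilibrium (arcsine) measure of $[0,4\kappa]$; only then does $\int t\,d\nu=2\kappa$ force $U^\nu\equiv-\log\kappa$ on $(0,4\kappa)$, hence $R^{-1}\bigl((0,4\kappa)\bigr)\subset K$ and \eqref{c1}. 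This is not a routine conversion: using the constraint only near a single point (letting $s\to0^-$ gives $\int\log t\,d\nu\ge\log\kappa$, then Jensen) yields merely $\int t\,d\nu\ge\kappa$, off by exactly the factor $2$ at stake, so the full constraint must enter through a genuine constrained-equilibrium (KKT/obstacle-type) argument plus a unicity-of-potentials step for the equality case. That argument is the actual mathematical content of case $(i)$ and is absent from your write-up, which you acknowledge.

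If you do not want to carry out that extremal analysis, the gap can be closed the way the paper does it: equality in \eqref{aa1} gives an a.e. identity on $\partial\bb D$ between boundary values of the bounded analytic functions $F_1(z)=-(1-\x^2(z))R_n^2(\x(z))B^{2n+2}(z)$ and $F_2(z)=d\,(1-B^{2n+2}(z))^2$, hence $F_1\equiv F_2$ on $\bb D$ with $d=\ca(K)^{2n+2}$; this yields $\sup_{z\in K}|1-z^2|R_n^2(z)\le4\ca(K)^{2n+2}$, i.e. $W_{\infty,n}(K,w)=2\ca(K)$, and then \cite[Theorem 3]{SchZin} gives \eqref{c1} and \eqref{e1}. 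Until one of these two arguments is supplied, part $(i)$ remains unproven in your proposal.
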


\begin{remark}Analogous results for $w(x)\equiv 1$ can be found in \cite[Theorem 4.4 and Corollary 4.5]{AlpZin2}). Theorem \ref{thm4}, Theorem \ref{thm5} for $K=[-1,1]$, \cite[Theorem 3]{SchZin} and \cite[Theorem 4.4 and Corollary 4.5]{AlpZin2}) together  imply Theorem \ref{smi}.
\end{remark} 
The plan for the rest of the paper is as follows. We prove Theorem \ref{thm1}, Theorem \ref{thm2} in Section 2 and prove Theorem \ref{thm4} and Theorem \ref{thm5} in Section 3. We also show that a more general version of Theorem \ref{thm4} and Theorem \ref{thm5} hold in Section 3.

\section{Chebyshev polynomials on subsets of $[-1,1]$ for weight functions $w(x)=\sqrt{1+x}$ and $w(x)=\sqrt{1-x}$}
Let $Q$ be a polynomial with leading coefficient $c$ and $\deg Q=N\geq 2$ such that $K:= Q^{-1}([a,b])=\{z\in\bb C:\,Q(z)\in[a,b]\}\subset \bb R$ and $a<b$. Let $\cc T(x)=\frac{2 Q(x)-a-b}{b-a}$. Then $\cc T^{-1}([-1,1])=K$, $\deg \cc T=N$. In view of \cite[Lemma 1, Theorem 11]{GerVAs88} and \cite[Corollary 2.3.]{Peh03}, there are $N$ closed intervals $E_1, \ldots,  E_N$ such that $E_k\cap E_j$ contains at most one point for any $j\neq k$ and $Q(E_j)=[a,b]$, $\cc T(E_j)=[-1,1]$. Here, 
\begin{align}\label{invpre}
	\mu_K (E_j)= 1/N
\end{align}
and $\cc T$ has real simple zeros and real coefficients and thus $Q$ is also a real polynomial and in particular $c\in \bb R$.

For the rest of this section we assume that $K$ is a non-polar compact subset of $[-1,1]$. We consider Chebyshev polynomials on $K$ with respect to the weights  $w(x)=\sqrt{1+x}$ and $w(x)=\sqrt{1-x}$. We focus on  $w(x)=\sqrt{1+x}$ and the results on $w(x)=\sqrt{1-x}$ follow immediately by symmetry about $0$.
Let 
\begin{align}
	w(x)=\sqrt{1+x}.
\end{align}
For each $n\in \bb N$, there are $n+1$ points (not necessarily unique) 
\begin{align}\label{p11}
x_0<x_1<\dots<x_n,
\end{align}
$x_k\in K$, $k=0,\ldots,n$ such that
\begin{align}\label{p12}
w(x_k)	T_{n,w}^{(K)}(x_k)=(-1)^{n-k}\|	w T_{n,w}^{(K)}\|_K
\end{align}
is satisfied, see e.g. Section 4.2 in \cite{Lo}. It is easy to see that the zeros of $T_{n,w}^{(K)}$ are real, let us denote them by $y_1<\dots <y_n$. Let $y_0:=-1$. Then by the intermediate value theorem, \eqref{p11}, \eqref{p12} we get
\begin{align}\label{xnyn}
-1=y_0<x_0<y_1<x_1<\dots <y_n<x_n\leq 1.
\end{align}
Let us consider the polynomial
\begin{align}\label{Q}
Q_{2n+1}(x):= (w(x)T_{n,w}^{(K)}(x))^2=(1+x)(T_{n,w}^{(K)}(x))^2.
\end{align}
Then it follows from \eqref{xnyn} that each point in the interval $(0,t_n(K,w)^2)$ has $2n+1$ distinct preimages of $Q_{2n+1}$ in $(y_k,x_k)$, $k=0,\ldots, n$ and in $(x_k,y_{k+1})$, $k=0,\ldots,n-1$. Thus 
\begin{align}\label{kn}
	K_n:= Q_{2n+1}^{-1}([0,t_n(K,w)^2])\subset [-1,1],
\end{align}
\begin{align}
	K\subset K_n
\end{align}
and $K_n$ is a union of finitely many closed intervals. In view of \eqref{invpre}, we have
\begin{align}
\mu_{K_n}([y_k,x_k])=\mu_{K_n}([x_j,y_{j+1}])=\frac{1}{2n+1},\,\, k=0,\ldots,n\mbox{ and } j=0,\ldots,n-1.
\end{align}
Since the leading coefficient of $Q_{2n+1}$ is $1$, by  \cite[Theorem 5.2.5]{Ran95}, 
\begin{align}\label{cakn}
\ca(K_n)^{2n+1}=\frac{t_n(K,w)^2}{4}.
\end{align}

\begin{proof}[Proof of Theorem \ref{thm1}]
	Let $Q_{2n+1}$ be as in \eqref{Q}, $K_n$ be as in \eqref{kn} and $x_k$, $y_k$ be as described above and so satisfy \eqref{p11}, \eqref{p12}, \eqref{xnyn}.
		\begin{enumerate}[$(i)$]
		\item  Since $K\subset K_n$, it follows from \eqref{cakn} that
		\begin{align}\label{caeq}
\frac{t_n(K,w)^2}{4}=\ca(K_n)^{2n+1}\geq\ca(K)^{2n+1}
		\end{align}
		and thus \eqref{cap1} holds.
		
	Let us assume that equality is satisfied in \eqref{cap1}. Then by \eqref{caeq}, $\ca(K)=\ca(K_n)$ and by \cite[Lemma 13]{SchZin}, $K=K_n$. If we let $S_n= T_{n,w}^{(K)}/t_n(K,w)$ then \eqref{leb} is satisfied.
	
	Conversely let us assume that \eqref{leb} is satisfied and let $c$ be the leading coefficient of $S_n$ and define $R_n=S_n/c$. Then 
	\begin{align}\label{sne1}
		\ca(K)^{2n+1}=\frac{1} {4|c|^2}
	\end{align}
	and 
	\begin{align}\label{sne2}
		\|wR_n\|_K=\frac{1}{|c|}.
	\end{align}
Combining \eqref{sne1}, \eqref{sne2} we get
			\begin{align}
W_{\infty, n}(K,w)\leq 2 \sqrt{\ca(K)}.
		\end{align}
Since \eqref{cap1} is satisfied this implies equality in \eqref{cap1}.

\item Note that (see \cite[Lemma 13]{SchZin}) 
\begin{align}\label{right}
	\log\left (\frac{\ca(K_n)}{\ca(K)}\right)=\int_{K_n\setminus K} g_K(x )\,d\mu_{K_n}(x)\leq \int_{[-1,1]\setminus K}g_K(x)\,d\mu_{K_n}(x).
\end{align}
Let $a=\inf K$ and $b=\sup K$ and 

\begin{align}\label{left}
	\displaystyle	[-1,1]\setminus K=[-1,a) \cup (b,1]\cup_j I_j
\end{align}
where the $I_j$'s are open disjoint  intervals that are connected components of $[-1,1]\setminus K$. Since $[-1,a)\cap K_n\subset [y_0,x_0]$ and $\sup_{x\in [-1,a)}g_K(x)=g_K(-1)$, 
it follows from \eqref{invpre} that
\begin{align}\label{left1}
\int_{[-1,a)} g_K(x)\,d\mu_{K_n}(x)\leq \frac{1}{2n+1}g_K(-1).
\end{align}
Since $Q_{2n+1}$ is strictly increasing on $[y_n,1]$, $b=x_n$. Thus $(b,1]\cap K_n=\emptyset$ and
\begin{align}\label{left2}
	\int_{(b,1]} g_K(x)\,d\mu_{K_n}(x)=0.
\end{align}
Since $x_k\in K$ for each $k$, the interval $I_j$ (for any particular $j$) can intersect with at most two consecutive intervals $[y_k,x_{k}]$, $[x_k,y_{k+1}]$. Thus
\begin{align}\label{left3}
	\int_{I_j} g_K(x)\,d\mu_{K_n}(x)\leq \frac{2}{2n+1}g_K(c_j)
\end{align}
where $c_j$ is the critical point of $g_K$ in $I_j$. Thus \eqref{right}, \eqref{left1}, \eqref{left2}, \eqref{left3} yield

\begin{align}\label{right1}
	\log\left (\frac{\ca(K_n)}{\ca(K)}\right)\leq \frac{2}{2n+1}\mathrm{PW}(K)+\frac{1}{2n+1}g_K(-1)
\end{align}
and thus
\begin{align}\label{right2}
	\left (\frac{\ca(K_n)}{\ca(K)}\right)^{\frac{2n+1}{2}}\leq e^{\mathrm{PW}(K)+(1/2)g_K(-1)}.
\end{align}
Since \eqref{cakn} holds, \eqref{cap2} follows from \eqref{right2}. 

Since $g_K$ is  piecewise strictly monotone on all the intervals given on the right side of \eqref{left}, all the inequalities in \eqref{left3} and \eqref{left1}  are strict. Therefore equality is attained in \eqref{cap2} only if  all the $I_j$'s and $[-1,a)$ are empty. Thus equality is satisfied in \eqref{cap2} only if $K=[-1,b]$ for some $b\in(-1,1]$.

Conversely, if $K=[-1,b]$ for some $b\in(-1,1]$ then for all $n\in \bb N$, $K=K_n$  and \eqref{leb} holds. This implies 
\begin{align}\label{right3}
W_{\infty,n}(K,w)=2\sqrt{\ca(K)}= 2\sqrt{\ca(K)}e^{\mathrm{PW}(K)+(1/2)g_K(-1)}
\end{align}
since $g_K(-1)=\mathrm{PW}(K)=0$. This completes the proof.
\end{enumerate}
\end{proof}
Now, let us consider the above theorem for the weight $w(x)=\sqrt{1-x}$ on $K\subset [-1,1]$. Let $L=\{z\in\bb C:  -z\in K\}$ and $w_1(x)=\sqrt{1+x}$. If $T_{n,w}^{(K)}(x)=\prod_{j=1}^n (x-\alpha_j)$ then  $T_{n,w_1}^{(L)}(x)=\prod_{j=1}^n (x+\alpha_j)$, $\ca(K)=\ca(L)$, $t_{n}(K,w)=t_{n}(L,w_1)$ and $g_K(z)=g_L(-z)$. Thus we can rewrite Theorem \ref{thm1} for $w(x)=\sqrt{1-x}$ as follows:
\begin{theorem}\label{thm2}
	Let $K$ be a compact non-polar subset of $[-1,1]$ and 
	\begin{align}
		w(x)=\sqrt{1-x}.
	\end{align}
	Then
	
	\begin{enumerate}[$(i)$]
		\item For each $n\in\bb N$
		\begin{align}\label{cap11}
			W_{\infty,n}(K,w)\geq 2\sqrt{\ca(K)}.
		\end{align}
		Equality  is attained \eqref{cap11} if and only if there exists a polynomial $S_n$ of degree $n$ such that
		\begin{align}\label{leb22}
			K=\{z\in\bb C:(1-z)S_n^2(z)\in[0,1]\}.
		\end{align}
		\item In addition, let us assume that $K$ is a regular Parreau-Widom set . Then 
		\begin{align}\label{cap22}
			W_{\infty,n}(K,w)\leq 2 \sqrt{\ca(K)} e^{(1/2)g_K(1)+ \mathrm{PW}(K)}.
		\end{align}
		Equality is attained in \eqref{cap22} if and only if $K=[a,1]$ for some $a\in [-1,1)$.
	\end{enumerate}
	\end{theorem}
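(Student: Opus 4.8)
The plan is to reduce Theorem \ref{thm2} entirely to Theorem \ref{thm1} by reflecting about the origin, as already indicated in the paragraph immediately preceding the statement. Set $\phi(z)=-z$ and $L=\phi(K)=\{z\in\bb C:-z\in K\}$, so that $L$ is a compact non-polar subset of $[-1,1]$. Since $\phi$ is an affine isometry of $\bb C$ (in particular a conformal automorphism of $\ol{\bb C}$), it preserves logarithmic capacity and conjugates Green functions and equilibrium measures: $\ca(L)=\ca(K)$, $g_L(z)=g_K(-z)$, and $\mu_L=\phi_*\mu_K$. The first step is simply to record these identities, together with the observation that the weight $w(x)=\sqrt{1-x}$ on $K$ becomes, under $x\mapsto -x$, the weight $w_1(x)=\sqrt{1+x}$ on $L$, because for $x\in L$ one has $-x\in K$ and $w(-x)=\sqrt{1+x}=w_1(x)$.

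Next I would transfer the weighted Chebyshev problem. Given any monic $P_n(x)=\prod_{j=1}^n(x-\al_j)$, the reflected polynomial $\wti P_n(x):=(-1)^nP_n(-x)=\prod_{j=1}^n(x+\al_j)$ is again monic of degree $n$, and for $x\in L$,
\begin{align}
w_1(x)\,|\wti P_n(x)|=\sqrt{1+x}\,|P_n(-x)|=\sqrt{1-y}\,|P_n(y)|=w(y)\,|P_n(y)|,\quad y=-x\in K.
\end{align}
Hence $\|w_1\wti P_n\|_L=\|wP_n\|_K$, so the map $P_n\mapsto\wti P_n$ is a norm-preserving bijection between the two monic extremal problems. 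It follows that $T_{n,w_1}^{(L)}(x)=(-1)^nT_{n,w}^{(K)}(-x)$ and $t_n(L,w_1)=t_n(K,w)$, whence
\begin{align}
W_{\infty,n}(K,w)=\f{t_n(K,w)}{\ca(K)^n}=\f{t_n(L,w_1)}{\ca(L)^n}=W_{\infty,n}(L,w_1).
\end{align}

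With these identifications in place, I would apply Theorem \ref{thm1} to the pair $(L,w_1)$ and translate back. For part $(i)$, Theorem \ref{thm1}$(i)$ gives $W_{\infty,n}(L,w_1)\geq 2\sqrt{\ca(L)}$, which becomes \eqref{cap11} after using the identities of the first paragraph; and equality holds iff $L=\{z:(1+z)S_n^2(z)\in[0,1]\}$ for some degree-$n$ polynomial $S_n$. Setting $\wti S_n(z):=S_n(-z)$, the condition $z\in K\iff-z\in L$ rewrites this equality set as $K=\{z:(1-z)\wti S_n^2(z)\in[0,1]\}$, which is exactly \eqref{leb22}. For part $(ii)$, I first note that the Parreau-Widom property is preserved: since $\phi$ is an affine homeomorphism conjugating the Green functions, it sends the bounded complementary intervals $I_j$ of $K$ to those of $L$ and their critical points to critical points, so $\mathrm{PW}(L)=\mathrm{PW}(K)$ and $L$ is again a regular Parreau-Widom set. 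Theorem \ref{thm1}$(ii)$ then yields $W_{\infty,n}(L,w_1)\leq 2\sqrt{\ca(L)}\,e^{(1/2)g_L(-1)+\mathrm{PW}(L)}$, and substituting $g_L(-1)=g_K(1)$, $\mathrm{PW}(L)=\mathrm{PW}(K)$, $\ca(L)=\ca(K)$ gives \eqref{cap22}. Equality in Theorem \ref{thm1}$(ii)$ holds iff $L=[-1,b]$ for some $b\in(-1,1]$; reflecting, $K=\phi(L)=[-b,1]$, and with $a:=-b\in[-1,1)$ this is precisely the condition $K=[a,1]$.

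The argument is essentially bookkeeping, so the only real care is in the third step: verifying that each structural ingredient entering Theorem \ref{thm1}$(ii)$ — regularity, the Parreau-Widom sum, the value $g_K(\pm1)$, and the shape of the equality set — transforms correctly under $\phi$. The sign flip in the Green-function argument ($g_L(-1)=g_K(1)$) and the reflected equality set $(1-z)\wti S_n^2(z)\in[0,1]$ are the two points where a sign error would be easy to make, so I would expect the verification of these to be the main (if modest) obstacle; there is no genuine analytic difficulty beyond Theorem \ref{thm1} itself.
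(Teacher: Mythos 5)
Your proposal is correct and is essentially the paper's own argument: the paper derives Theorem \ref{thm2} from Theorem \ref{thm1} by exactly this reflection $z\mapsto -z$, recording the identities $\ca(K)=\ca(L)$, $t_n(K,w)=t_n(L,w_1)$, $g_K(z)=g_L(-z)$ and the correspondence of Chebyshev polynomials in the paragraph preceding the statement. Your write-up just makes the bookkeeping (including the transfer of the Parreau--Widom sum and the equality cases) more explicit than the paper does.
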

The next theorem is used in the proof of Theorem \ref{thm5}.
\begin{theorem}\label{thm3}
	\begin{enumerate}[$(i)$]
		\item Let 	$K=\{z\in\bb C:(1+z)S_n^2(z)\in[0,1]\}\subset [-1,1]$ where $S_n$ is a polynomial of degree $n$ with leading coefficient $c$ and $n\geq 1$. Then
		\begin{align}\label{pes}
			T_{2n+1}^{(K)}(z)=\frac{(1+z)S_n^2(z)}{c^2}-2\ca(K)^{2n+1}.
		\end{align}
			\item Let 	$K=\{z\in\bb C:(1-z)S_n^2(z)\in[0,1]\}\subset [-1,1]$ where $S_n$ is a polynomial of degree $n$ with leading coefficient $c$ and $n\geq 1$. Then
	\begin{align}
		T_{2n+1}^{(K)}(z)=2\ca(K)^{2n+1}-\frac{(1-z)S_n^2(z)}{c^2}.
	\end{align}
	\end{enumerate}
\end{theorem}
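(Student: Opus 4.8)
The plan is to exhibit the monic polynomial appearing on the right-hand side of \eqref{pes}, to prove it is the degree $2n+1$ Chebyshev polynomial of $K$, and then to invoke uniqueness. First I would set $Q(z):=(1+z)S_n^2(z)$, a real polynomial of degree $2n+1$ whose leading coefficient is $c^2$ (reality and positivity of $c^2$ being forced exactly as in the discussion preceding \eqref{invpre}, since $K=Q^{-1}([0,1])\subset\bb R$). With this notation $K=Q^{-1}([0,1])$, and I would introduce the monic candidate
\begin{align}
P(z):=\frac{Q(z)}{c^2}-\frac{1}{2c^2}=\frac{(1+z)S_n^2(z)}{c^2}-\frac{1}{2c^2}.
\end{align}
Because $Q(z)\in[0,1]$ for every $z\in K$, one has $|Q(z)-1/2|\leq 1/2$ there, hence $\|P\|_K\leq 1/(2c^2)$.

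The core of the argument is a squeeze between two known facts. On the one hand, applying the capacity formula for polynomial preimages \cite[Theorem 5.2.5]{Ran95} with $\ca([0,1])=1/4$ gives
\begin{align}
\ca(K)=\left(\frac{1/4}{c^2}\right)^{1/(2n+1)},\qquad\text{so}\qquad 2\,\ca(K)^{2n+1}=\frac{1}{2c^2}.
\end{align}
On the other hand, the sharp lower bound $t_N(K)\geq 2\,\ca(K)^N$ (equivalently $W_{\infty,N}(K)\geq 2$; see \cite[eq. (1.4)]{CSZ3}) yields $t_{2n+1}(K)\geq 2\,\ca(K)^{2n+1}=1/(2c^2)$. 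Since $P$ is monic of degree $2n+1$, the definition of $t_{2n+1}(K)$ forces $t_{2n+1}(K)\leq\|P\|_K\leq 1/(2c^2)$. All three quantities therefore coincide, so $P$ attains the Chebyshev minimum; by uniqueness of the sup-norm Chebyshev polynomial on a set of positive capacity, $T_{2n+1}^{(K)}=P$, which is precisely \eqref{pes} after substituting $1/(2c^2)=2\,\ca(K)^{2n+1}$. Part $(ii)$ would then follow from $(i)$ by the reflection $z\mapsto -z$, exactly as Theorem \ref{thm2} was deduced from Theorem \ref{thm1}: the reflected set is of the form in $(i)$ with the same $c^2$, and conjugating the odd-degree Chebyshev polynomial produces the stated sign change.

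The step I expect to demand the most care is the appeal to the optimal bound $t_N\geq 2\,\ca(K)^N$. If one prefers a self-contained derivation avoiding that citation, the alternative is to show directly that $P$ equioscillates. For this I would use the band structure from \cite{GerVAs88,Peh03} recalled at the start of this section: writing $K=\cc T^{-1}([-1,1])$ with $\cc T:=2Q-1$ of degree $2n+1$, the map $\cc T$ carries each of the (at most) $2n+1$ bands onto $[-1,1]$, producing at least $2n+2$ points of $K$ at which $\cc T=\pm1$ with alternating signs, i.e.\ at which $P=\pm 1/(2c^2)=\pm\|P\|_K$ alternately; the equioscillation characterization then identifies $P=T_{2n+1}^{(K)}$ without any capacity input. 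The delicate point along this second route is the bookkeeping of the alternation set when bands abut or $\cc T$ attains a critical value in $\{-1,1\}$ (touching bands), where one must check that no alternation is lost.
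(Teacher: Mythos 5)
Your proposal is correct and follows essentially the same route as the paper: compute $\ca(K)$ via the polynomial-preimage capacity formula, observe that the candidate monic polynomial has sup-norm at most $2\ca(K)^{2n+1}$ on $K$, and conclude by saturating Schiefermayr's lower bound $t_N(K)\geq 2\ca(K)^N$ together with uniqueness of the Chebyshev polynomial (the paper phrases the intermediate step as identifying $S_n/c$ with the weighted Chebyshev polynomial $T_{n,w}^{(K)}$ via \eqref{cap1}, but the squeeze is the same). Your reflection argument for part $(ii)$ likewise matches the paper's treatment, so the equioscillation alternative you sketch is not needed.
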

\begin{proof}
	\begin{enumerate}[$(i)$]
		\item Note that $\ca(K)^{2n+1}=1/(4|c|^2)$. Let $w(x)=\sqrt{1+x}$ and $R_n=c^{-1}S_n$. Then $\|wR_n\|_K=1/|c|=2\ca(K)^n\sqrt{\ca(K)}$. Thus \\$\|wR_n\|_K/\ca(K)^n= 2\sqrt{\ca(K)}$. It follows from \eqref{cap1} and uniqueness of $T_{n,w}^{(K)}$ that $T_{n,w}^{(K)}=R_n$. Therefore the polynomial on the right side of the equality in \eqref{pes} is a monic polynomial of degree $2n+1$ and its sup-norm on $K$ is $2\ca(K)^{2n+1}.$ Hence (see \cite{Sch08}) \eqref{pes} holds.
		
		The part $(ii)$ follows from a similar argument used in $(i)$.
	\end{enumerate}
\end{proof}

\section{Widom factors for generalized Jacobi weights}
 For a function $f$ defined on $\bb D$ we use $f(e^{i\theta})$ to denote non-tangential boundary value of $f$ at  $e^{i\theta}$.
Let us briefly mention some results regarding universal covering maps, see \cite[Chapter 7]{Nev70}, \cite[Chapter 16]{Mar19}, \cite[Chapter 9]{Sim11}.
Let $K$ be a regular compact subset of $\bb R$. Then there is a unique covering map $\x:\bb D\to\ol{\bb C}\bs K$ such that it is a meromorphic function which satisfies
\begin{align}
	\x(0)=\infty,
\end{align}
\begin{align}
	\lim_{z\rightarrow 0} z\x(z)>0.
\end{align}
In addition, $\x(e^{i\theta})\in K$ ($d\theta$ a.e.) and (see e.g. Section~2.4 in \cite{Fish}) 
\begin{align}
	\int f d\mu_K = \int f(\x({e^{i\te}}))\, \frac{d\te}{2\pi},
	\quad f\in L_1(\mu_K).
\end{align}
We consider the analytic function $B$ on $\bb D$ that is  uniquely determined by

\begin{align}\label{grin}
	|B(z)|= e^{-g_K(\x(z))},
\end{align}
\begin{align}\label{xBcap}
	\lim_{z\to0}\x(z)B(z)=\ca(K).
\end{align}
It is given as a Blaschke product (\cite[Theorem 16.11]{Mar19}),  $|B(e^{i\te})|=1$ a.e.\ on $\pd\bb D$, it has simple zeros at $\x^{-1}(\infty)$. For a polynomial $P$ of degree $n$, the function $F(z)=P(\x(z))B^n(z)$  has only removable singularities and  it can be identified with a bounded analytic function on $\bb D$. For such functions, we assume that removable singularities have already
been removed.

Since $B(0)=0$, $\overline{B(e^{i\theta})}={B(e^{i\theta})}^{-1}$, for all $k\in \bb Z\setminus\{0\}$ we have
\begin{align}\label{bbb1}
	\int_{0}^{2\pi}B^k(e^{i\theta})\frac{d\theta}{2\pi}=0.
\end{align}

\begin{proof}[Proof of Theorem \ref{thm4}]
	By Frostman's theorem (see \cite[Theorem 3.3.4, Theorem 4.2.4]{Ran95})
		\begin{align}
S(\mu)&=\exp\left[\int (\alpha \log|1-x| +\beta \log|1+x|)\, d\mu_K(x)   \right]\nonumber\\
&=\exp[(\alpha+\beta)\log(\ca(K))]\nonumber\\
&=(\ca(K))^{\alpha+\beta}.\label{zor1}
	\end{align}

Note that, $(-1)^\alpha (1-\x(z))^{\alpha}(1+\x(z))^{\beta}B^{\alpha+\beta}(z)$ is bounded and analytic on $\bb D$. It is never zero on $\bb D$ since $-1,1\in K$  and \eqref{xBcap} is satisfied.
Therefore it has an analytic square root $H$ on $\bb D$ such that $H(0)=\ca(K)^{\frac{\alpha+\beta}{2}}$. Let $R_n$ be a monic real polynomial of degree $n$. Then 
\begin{align}\label{weq1}
	(\ca(K))^{\frac{2n+\alpha+\beta}{2}}=\int _{0}^{2\pi}H(e^{i\theta})R_n(\x(e^{i\theta}))B^n(e^{i\theta})\,\frac{d\theta}{2\pi}.
\end{align}
The right hand side of \eqref{weq1} can be written as 
\begin{align}\label{weq2}
\int _{0}^{2\pi}\left[(1-\x(e^{i\theta}))^\alpha (1+\x(e^{i\theta}) )^\beta\right]^{\frac{1}{2}} R_n(\x(e^{i\theta}))\left[(-1)^\alpha B^{\alpha+\beta}(e^{i\theta})\right]^{\frac{1}{2}} B^n(e^{i\theta}) \,\frac{d\theta}{2\pi}
\end{align}
where $G(e^{i\theta})=\left[(1-\x(e^{i\theta}))^\alpha (1+\x(e^{i\theta}) )^\beta\right]^{\frac{1}{2}}$ is non-negative on $\partial \bb D$ and \\
$\left[(-1)^\alpha B^{\alpha+\beta}(e^{i\theta})\right]^{\frac{1}{2}}=H(e^{i\theta})/G(e^{i\theta})$.

Since $\x(e^{i\theta})\in \bb R$ and $R_n$ is a real polynomial, if we add the term in \eqref{weq2} to its conjugate we get
\begin{align}\label{weq3}
2(\ca(K))^{\frac{2n+\alpha+\beta}{2}}&=\int _{0}^{2\pi}\left[(1-\x(e^{i\theta}))^\alpha (1+\x(e^{i\theta}) )^\beta\right]^{\frac{1}{2}} R_n(\x(e^{i\theta}))\nonumber\\
&\times \left[\left((-1)^\alpha B^{\alpha+\beta}(e^{i\theta})\right)^{\frac{1}{2}} B^n(e^{i\theta})+\overline{\left((-1)^\alpha B^{\alpha+\beta}(e^{i\theta})\right)^{\frac{1}{2}} B^n(e^{i\theta})} \right]\,\frac{d\theta}{2\pi}.
\end{align}
Since $\left((-1)^\alpha B^{\alpha+\beta}(e^{i\theta})\right)^{\frac{1}{2}} B^n(e^{i\theta})\in \partial \bb D$ and $\overline{B(e^{i\theta})}={(B(e^{i\theta}))}^{-1}$ we have 
\begin{align}\label{weq4}
 &\left[\left((-1)^\alpha B^{\alpha+\beta}(e^{i\theta})\right)^{\frac{1}{2}} B^n(e^{i\theta})+\overline{\left((-1)^\alpha B^{\alpha+\beta}(e^{i\theta})\right)^{\frac{1}{2}} B^n(e^{i\theta})} \right]^2\nonumber\\
 &= 2+2\Re{\left[(-1)^\alpha (B(e^{i\theta}))^{2n+\alpha+\beta}\right]}.
\end{align}
Using Cauchy-Schwarz in \eqref{weq3} and then \eqref{weq4} we obtain
\begin{align}
	&2(\ca(K))^{\frac{2n+\alpha+\beta}{2}}\nonumber \\
	&\leq \left[\int _{0}^{2\pi}\left[(1-\x(e^{i\theta}))^\alpha (1+\x(e^{i\theta}) )^\beta\right] R_n^2(\x(e^{i\theta}))\,\frac{d\theta}{2\pi}\right]^{\frac{1}{2}}\nonumber\\
	&\times \left[\int _{0}^{2\pi}\left(\left((-1)^\alpha B^{\alpha+\beta}(e^{i\theta})\right)^{\frac{1}{2}} B^n(e^{i\theta})+\overline{\left((-1)^\alpha B^{\alpha+\beta}(e^{i\theta})\right)^{\frac{1}{2}} B^n(e^{i\theta})} \right)^2\,\frac{d\theta}{2\pi}\right]^{\frac{1}{2}}\label{aa1}\\
	&= \left[\int _{0}^{2\pi}\left[(1-\x(e^{i\theta}))^\alpha (1+\x(e^{i\theta}) )^\beta\right] R_n^2(\x(e^{i\theta}))\,\frac{d\theta}{2\pi}\right]^{\frac{1}{2}}\nonumber\\
	&\times \left[\int _{0}^{2\pi}\left(2+2\Re{\left[(-1)^\alpha (B(e^{i\theta}))^{2n+\alpha+\beta}\right]}\right)\,\frac{d\theta}{2\pi}\right]^{\frac{1}{2}}\\
	&= \left[\int (1-x)^\alpha (1+x )^\beta R_n^2(x)\, d \mu_K(x)\right]^{\frac{1}{2}} \left[2+2\Re{\left[(-1)^\alpha (B(0))^{2n+\alpha+\beta}\right]}\right]^{\frac{1}{2}}\\
	&=\sqrt{2}\|R_n\|_{L_2({\mu})}.
\end{align}
 Hence, it follows from \eqref{zor1} that 
 \begin{align}
 	\frac{\|R_n\|^2_{L_2({\mu})}}{\ca{(K)}^{2n}}\geq 2 S(\mu)
 	\end{align}
and this implies \eqref{zor2} since $P_n(\cdot;\mu)$ is a real polynomial.
\end{proof}

\begin{proof}[Proof of Theorem \ref{thm5}]
	Since $K$ is regular, $\mathrm{supp}(\mu_K)=K$ in view of \cite[Theorem 4.2.3]{Ran95} and \cite[Corollary 5.5.12]{Sim11}. Thus $\ca{(K)}=\ca{(\mathrm{supp}(\mu))}$ for all three cases.
	\begin{enumerate}[$(i)$]
		\item Let $\alpha=1$, $\beta=1$ in Theorem \ref{thm4}. 
		First, let us assume $[W_{2,n}(\mu)]^2=2S(\mu)$. Then equality is attained in \eqref{aa1}. Therefore there is a monic real polynomial $R_n$ of degree $n$ and a constant $d$ such that
		\begin{align}\label{b1}
			(1-\x^2(e^{i\theta}))R_n^2(\x(e^{i\theta}))=\frac{d(1-B^{2n+2}(e^{i\theta}))^2}{-B^{2n+2}(e^{i\theta})},\,\,\, ( d\theta\, \mbox{a.e.}).
		\end{align}
 By \eqref{zor2} and uniqueness of $n$-th monic orthogonal  polynomial this implies
\begin{align}\label{orti}
	P_n(x;\mu)=R_n(x).
\end{align} 

Let  $F_1(z)=-(1-\x^2(z)) R_n^2(\x(z))B^{2n+2}(z)$ and $F_2(z)=d(1-B^{2n+2}(z))^2$ be two functions on $\bb D$. Then by \eqref{b1}, $F_1(e^{i\theta})=F_2(e^{i\theta})$  on $ \partial \bb D$ and since $F_1$ and $F_2$ are bounded and analytic, this implies that $F_1(z)=F_2(z)$, $z\in\bb D$. Since $F_1(0)=\ca(K)^{2n+2}$ and $B(0)=0$ this implies that $d=\ca(K)^{2n+2}$. Thus for all $z\in \bb D$ we have
\begin{align}
	|F_1(z)|&= |(1-\x^2(z))R_n^2(\x(z))|{e^{-(2n+2)g_K(\x(z))}}\\
	&=|F_2(z)|\\
	&\leq  {\ca(K)^{2n+2}\left[1+e^{-(2n+2)g_K(\x(z))}\right]^2}.
\end{align}
Hence for all $z\in \bb C\setminus K$, we have 
\begin{align}\label{belieber}
	|(1-z^2)R_n^2(z)|\leq  \frac{\ca(K)^{2n+2}\left[1+e^{-(2n+2)g_K(z)}\right]^2}{e^{-(2n+2)g_K(z)}}.
\end{align}
Since $g_K$ is continuous throughout $\bb C$ and $g_K(x)=0$ for $x\in K$ by regularity  of $K$, it follows from \eqref{belieber} that
\begin{align}\label{belieber2}
\displaystyle	\sup_{z\in K}\left[|1-z^2|R_n^2(z)\right]\leq  4\ca(K)^{2n+2}.
\end{align}
The inequality \eqref{belieber2} implies $W_{\infty,n}(K,w)\leq 2 {\ca(K)}$ and combining this with \cite[eq. (12)]{SchZin} we obtain $W_{\infty,n}(K,w)= 2 {\ca(K)}$. By \cite[Theorem 3]{SchZin} this implies \eqref{c1}. Note that \eqref{belieber2} implies $R_n=T_{n,w}^{(K)}$ and by the proof of \cite[Theorem 3]{SchZin}, $T_{n,w}^{(K)}=S_n/c$. Combining this with \eqref{orti} we get \eqref{e1}.

Conversely, let us assume that $K$ satisfies \eqref{c1}. Let $R_n=S_n/c$. Then  
\begin{align}\label{f1}
R_n=T_{n,w}^{(K)}.
\end{align}

Using \cite[Theorem 12(iii)]{SchZin} in \eqref{g1} and \cite[eq.s (2.3), (2.4)]{CSZ17} in \eqref{g2} we see that for all $z\in\bb D$,
\begin{align}
2\ca(K)^{2n+2}-(1-\x^2(z))R_n^2(\x(z))&=T^{(K)}_{2n+2}(\x(z))\label{g1}\\
&=\ca(K)^{2n+2}\left[B^{2n+2}(z)+B^{-2n-2}(z)\right]\label{g2}.
\end{align}
Hence
\begin{align}\label{ba1}
	\ca(K)^{2n+2}\left[2-B^{2n+2}(z)-B^{-2n-2}(z)\right]= (1-\x^2(z))R_n^2(\x(z)).
\end{align}
Note that in view of \eqref{bbb1}
\begin{align}
	\ca(K)^{2n+2}	\int_0^{2\pi}\left[2-B^{2n+2}(e^{i\theta})-B^{-2n-2}(e^{i\theta})\right]\frac{d\theta}{{2\pi}}=2\ca(K)^{2n+2}
\end{align}
and
\begin{align}
\int_0^{2\pi}(1-\x^2(e^{i\theta}))R_n^2(\x(e^{i\theta}))\frac{d\theta}{{2\pi}}&=\int (1-x^2)R_n^2(x)d\mu_K(x)\\
&=\|R_n\|^2_{L_2(\mu)}.
\end{align}
Thus, it follows from \eqref{ba1} and \eqref{zor1} that 
\begin{align}
\frac{\|R_n\|^2_{L_2(\mu)}}{\ca(K)^{2n}}=2S(\mu).
\end{align}
This implies  $[W_{2,n}(\mu)]^2\leq 2S(\mu)$ and it follows from \eqref{zor2} that $[W_{2,n}(\mu)]^2= 2S(\mu)$.
\item Let $\alpha=0$, $\beta=1$ in Theorem \ref{thm4}. First, let us assume  that $[W_{2,n}(\mu)]^2= 2S(\mu).$
Then equality is attained in \eqref{aa1} and thus there is a monic real polynomial $R_n$ of degree $n$ and a constant $d$ such that

	\begin{align}\label{r1}
		(1+\x(e^{i\theta}))R_n^2(\x(e^{i\theta}))=\frac{d \left[B^{2n+1}(e^{i\theta})+1\right]^2}{B^{2n+1}(e^{i\theta})}\,\, (d\theta \mbox{ a.e.})
	\end{align}
and
\begin{align}\label{p1}
	R_n(x)=P_n(x;\mu).
\end{align}

 Let $F_1(z)= (1+\x(z))R_n^2(\x(z))B^{2n+1}(z)$ and $F_2(z)=d (B^{2n+1}(z)+1)^2$. Then $F_1$, $F_2$ are bounded analytic functions on $\bb D$ and $F_1(e^{i\theta})=F_2(e^{i\theta})$
on $\partial \bb D$ and thus $F_1(z)=F_2(z)$ for $z\in \bb D$. Since $F_1(0)=\ca(K)^{2n+1}$ and $B(0)=0$ we get $d=\ca(K)^{2n+1}$. For $z\in \bb D$,
\begin{align}
	|F_1(z)|&= |(1+\x(z))R_n^2(\x(z))|{e^{-(2n+1)g_K(\x(z))}}\\
	&=|F_2(z)|\\
	&\leq  {\ca(K)^{2n+1}\left[1+e^{-(2n+1)g_K(\x(z))}\right]^2}
\end{align}
is satisfied. Hence for $z\in \bb C\setminus K$, 
\begin{align}\label{r2}
	|(1+z)R_n^2(z)|\leq  \frac{\ca(K)^{2n+1}\left[1+e^{-(2n+1)g_K(z)}\right]^2}{e^{-(2n+1)g_K(z)}}.
\end{align}
Since $g_K(x)=0$ for $x\in K$ and $g_K$ is continuous in $\bb C$, \eqref{r2} implies that
\begin{align}\label{belieber3}
	\displaystyle	\sup_{z\in K}\left[|1+z|R_n^2(z)\right]\leq  4\ca(K)^{2n+1}.
\end{align}
This implies that $W_{\infty,n}(K,w)\leq 2 \sqrt{\ca(K)}$ and since \eqref{cap1} holds we have $W_{\infty,n}(K,w)= 2 \sqrt{\ca(K)}$ and $R_n=T_{n,w}^{(K)}$. It follows from Theorem \ref{thm1} (i) that $K$ satisfies \eqref{c2}. It follows from the proof of Theorem \ref{thm3} (i) that $R_n=S_n/c$. This combined with \eqref{p1} implies \eqref{e1}.

Conversely, assume that $K$ satisfies \eqref{c2} and let $R_n=S_n/c$. Then 
\begin{align}\label{o1}
	R_n=T_{n,w}^{(K)}.
\end{align}

Using  Theorem \ref{thm3} (i) in \eqref{g11} and \cite[eq.s (2.3), (2.4)]{CSZ17} in \eqref{g12} we see that for all $z\in\bb D$,
\begin{align}
	(1+\x(z))R_n^2(\x(z))-2\ca(K)^{2n+1}&=T^{(K)}_{2n+1}(\x(z))\label{g11}\\
	&=\ca(K)^{2n+1}\left[B^{2n+1}(z)+B^{-2n-1}(z)\right]\label{g12}.
\end{align}
Therefore
\begin{align}\label{debu}
	(1+\x(z))R_n^2(\x(z))=\ca(K)^{2n+1}\left[B^{2n+1}(z)+B^{-2n-1}(z)+2\right].
\end{align}
Here,
\begin{align}
	\ca(K)^{2n+1}	\int_0^{2\pi}\left[B^{2n+1}(e^{i\theta})+B^{-2n-1}(e^{i\theta})+2\right]\frac{d\theta}{{2\pi}}=2\ca(K)^{2n+1}
\end{align}
and
\begin{align}
	\int_0^{2\pi}(1+\x(e^{i\theta}))R_n^2(\x(e^{i\theta}))\frac{d\theta}{{2\pi}}&=\int (1+x)R_n^2(x)d\mu_K(x)\\
	&=\|R_n\|^2_{L_2(\mu)}.
\end{align}
Hence in view of  \eqref{zor1} and \eqref{debu} we have 
\begin{align}
	\frac{\|R_n\|^2_{L_2(\mu)}}{\ca(K)^{2n}}=2S(\mu)
\end{align}
which implies  $[W_{2,n}(\mu)]^2\leq 2S(\mu)$ and using \eqref{zor2} we see that $[W_{2,n}(\mu)]^2= 2S(\mu)$.

\item Let $L=\{z\in \bb C: -z\in K\}$ and $d\mu_2(x)=(1+x)d\mu_L(x).$ Then in view of  \cite[eq. (1.9) and Lemma 4(c)]{PehSte00} we see that $P_n(x;\mu)=(-1)^n P_n(-x;\mu_2)$ and
$\|P_n(\cdot;\mu)\|_{L_2(\mu)}=\|P_n(\cdot;\mu_2)\|_{L_2(\mu_2)}$. Therefore part $(iii)$ is a straightforward consequence of part $(ii)$.
	\end{enumerate}
\end{proof}
We can easily generalize Theorems \ref{thm4} and \ref{thm5} as follows.

\begin{corollary}\label{cor}
	Let $L$ be a regular compact subset of $[a,b]$, $a,b\in L$ and $\alpha,\beta \in \bb N\cup \{0\}$ with $\alpha+\beta\geq 1$. Let
	\begin{align}
		T(x)=\frac{2x-a-b}{b-a},
	\end{align}
	\begin{align}
d\nu(x)=(1-T(x))^\alpha(1+T(x))^\beta  d\mu_L(x),
\end{align}
and
	\begin{align}
	w(x)=\sqrt{(1-T(x))^\alpha(1+T(x))^\beta }.
\end{align}
Then
\begin{enumerate}[$(i)$]
	\item \begin {align}\label{z1}
		\left[W_{2,n}(\nu)\right]^2\geq 2S(\nu).
	\end{align}
	\item Let $\alpha=1$, $\beta=1$. Then equality is satisfied in \eqref{z1} if and only if there is a polynomial $S_n$ of degree $n$ such that
\begin {align}\label{z2}
L=\{z\in \bb C: (1-T^2(z))S_n^2(T(z))\in [0,1]\}.
\end{align}
\item Let $\alpha=0$, $\beta=1$. Then equality is satisfied in \eqref{z1} if and only if there is a polynomial $S_n$ of degree $n$ such that
\begin {align}\label{z3}
L=\{z\in \bb C: (1+T(z))S_n^2(T(z))\in [0,1]\}.
\end{align}
\item Let $\alpha=1$, $\beta=0$. Then equality is satisfied in \eqref{z1} if and only if there is a polynomial $S_n$ of degree $n$ such that
\begin {align}\label{z4}
L=\{z\in \bb C: (1-T(z))S_n^2(T(z))\in [0,1]\}.
\end{align}
\end{enumerate}
If  \eqref{z2} (or \eqref{z3} or \eqref{z4} respectively) is satisfied then
\begin{align}\label{z5}
	P_n(x;\nu)=\frac{1}{c}\left(\frac{b-a}{2}\right)^nS_n(T(x))=T_{n,w}^{(L)}(x)
\end{align}
where $c$ is the leading coefficient of $S_n$.
\end{corollary}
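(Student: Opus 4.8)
The plan is to reduce everything to the already-treated case $K=[-1,1]$ via the affine change of variables $T$. Set $s=(b-a)/2$, so that $T(x)=(x-(a+b)/2)/s$ is a degree-one polynomial of slope $1/s$ mapping $[a,b]$ onto $[-1,1]$, and put $K:=T(L)$. Since affine maps preserve regularity and send $a\mapsto -1$, $b\mapsto 1$, the set $K$ is a regular compact subset of $[-1,1]$ with $\pm 1\in K$, so Theorems \ref{thm4} and \ref{thm5} apply to it directly. The first task is to record how the relevant quantities transform under $T$. Equilibrium measures push forward to equilibrium measures, so $\mu_K=T_*\mu_L$, and under a linear map of slope $1/s$ the logarithmic capacity scales as $\ca(K)=\ca(L)/s$, that is $\ca(L)=s\,\ca(K)$.

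Next I would transfer the measure. The pushforward $T_*\nu$ on $K$ has density $(1-u)^\alpha(1+u)^\beta$ with respect to $\mu_K$, which is exactly a measure of the form \eqref{bor} on $K$. Because this density is the same function $(1-u)^\alpha(1+u)^\beta$ evaluated at $u=T(x)$, and the averaging measures $\mu_L$, $\mu_K$ correspond under pushforward, the integral defining $S$ is unchanged and $S(\nu)=S(T_*\nu)$; no scaling factor enters. For the extremal polynomials, a monic $Q_n(u)=u^n+\cdots$ corresponds to the monic polynomial $s^nQ_n(T(x))=x^n+\cdots$ in the variable $x$, and by definition of the pushforward $\|s^nQ_n(T(\cdot))\|_{L_2(\nu)}=s^n\|Q_n\|_{L_2(T_*\nu)}$. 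By uniqueness of the monic orthogonal polynomial this gives $P_n(x;\nu)=s^nP_n(T(x);T_*\nu)$; dividing by $\ca(L)^n=s^n\ca(K)^n$ produces the affine invariance $W_{2,n}(\nu)=W_{2,n}(T_*\nu)$. Combined with $S(\nu)=S(T_*\nu)$, part $(i)$ follows at once from \eqref{zor2} applied to $T_*\nu$ on $K$.

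For parts $(ii)$--$(iv)$ I would argue the same way for the weighted Chebyshev polynomials. The weight $w(x)=\sqrt{(1-T(x))^\alpha(1+T(x))^\beta}$ on $L$ corresponds to $w'(u)=\sqrt{(1-u)^\alpha(1+u)^\beta}$ on $K$, and the same monic correspondence yields $\|wP_n\|_L=s^n\|w'Q_n\|_K$, whence $T_{n,w}^{(L)}(x)=s^nT_{n,w'}^{(K)}(T(x))$. Since $W_{2,n}$ and $S$ are affine invariant, equality in \eqref{z1} holds if and only if equality holds in \eqref{uni3} for $T_*\nu$ on $K$. Applying the relevant case of Theorem \ref{thm5} to $K$ in the three cases $(\alpha,\beta)=(1,1),(0,1),(1,0)$ characterizes this by $K=\{u:(1-u^2)S_n^2(u)\in[0,1]\}$, $K=\{u:(1+u)S_n^2(u)\in[0,1]\}$, and $K=\{u:(1-u)S_n^2(u)\in[0,1]\}$ respectively. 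Pulling these back through $L=T^{-1}(K)$ and substituting $u=T(z)$ gives exactly \eqref{z2}, \eqref{z3}, \eqref{z4}.

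Finally, for \eqref{z5}: when equality holds, Theorem \ref{thm5} gives $P_n(u;T_*\nu)=S_n(u)/c=T_{n,w'}^{(K)}(u)$ on $K$. Substituting into $P_n(x;\nu)=s^nP_n(T(x);T_*\nu)$ yields $P_n(x;\nu)=(s^n/c)S_n(T(x))=(1/c)((b-a)/2)^nS_n(T(x))$, and the identity $T_{n,w}^{(L)}(x)=s^nT_{n,w'}^{(K)}(T(x))$ identifies this with $T_{n,w}^{(L)}(x)$. The whole argument is essentially bookkeeping of the affine scalings; the only point requiring care is checking that the powers of $s$ cancel correctly in the Widom factor (they do, since the capacity carries exactly the compensating factor $s^n$) while not appearing at all in $S$ (since the density is preserved and the equilibrium measure is a probability measure), so that the reduction to the already-proven interval case is exact.
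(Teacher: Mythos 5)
Your proposal is correct and follows essentially the same route as the paper: reduce to $K=T(L)\subset[-1,1]$ with $\pm1\in K$, use affine invariance of $S$ and $W_{2,n}$ to transfer Theorems \ref{thm4} and \ref{thm5}, and pull the saturating sets back through $T^{-1}$. The only difference is cosmetic: where the paper cites \cite[Theorems 3.2, 3.3]{AlpZin2} for the invariance of $S(\cdot)$ and $W_{2,n}(\cdot)$ and \cite[Theorem 3]{PehSte00} for the correspondence of extremal polynomials, you verify these scaling identities directly, which your computations do correctly.
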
 
\begin{proof}
	Let $K=T(L)$. This implies that $L=T^{-1}(K)$, $K\subset [-1,1]$, $\pm 1\in K$ and $K$ is also regular. Let
	\begin{align}
		d\mu(x)=(1-x)^\alpha (1+x)^\beta d\mu_K(x).
	\end{align}
Then in view of \cite[Theorem 3.2 and Theorem 3.3.]{AlpZin2}, we see that 
\begin{align}\label{l1}
S(\mu)=S(\nu)
\end{align}
and
\begin{align}\label{l2}
W_{2,n}(\mu)=W_{2,n}(\nu). 
\end{align}
Hence \eqref{z1} follows from Theorem \ref{thm4}.

Let $\alpha,\beta=1$. Then 	$K=\{z\in \bb C: (1-z^2)S_n^2(z)\in [0,1]\}$ if and only if  \eqref{z2} is satisfied. Thus by Theorem \ref{thm5}(i), \eqref{l1}, \eqref{l2}, equality   is attained in \eqref{z1} if and only if \eqref{z2} is satisfied. If \eqref{z2} holds then  \eqref{z5} holds in view of \cite[Theorem 3]{PehSte00} and Theorem \ref{thm5} (i). The proof for the cases $\alpha=0, \beta=1$ and $\alpha=1,\beta=0$ is similar.
\end{proof}

%%%%%%%%%%%%%%%%%%%%%%%%%%%%%%%%%%%%%%%%%%%%%%%%%%%%%%%%%%%%%%%%%%%%%

%%%%%%%%%%%%%%%%%%%%%%%%%%%%%%%%%%%%%%%%%%%%%%%%%%%%%%%%%%%%%%%%%%%%%

%%%%%%%%%%%%%%%%%%%%%%%%%%%%%%%%%%%%%%%%%%%%%%%%%%%%%%%%%%%%%%%%%%%%%
\end{document}